\theoremstyle{plain}
\newtheorem{cor}{Corollary}
\newtheorem{thm}{Theorem}
\newtheorem{lem}{Lemma}
\title[Integers as combinations of powers]
{Representing integers as linear combinations of powers}
\author{Lajos Hajdu}
\address{Institute of Mathematics,\\
University of Debrecen,\\
and the Number Theory Research Group\\
of the Hungarian Academy of Sciences,\\
H-4010 Debrecen, P.O. Box 12,\\
Hungary}
\email{hajdul@science.unideb.hu}
\author{Rob Tijdeman}
\address{Mathematical Institute,\\
Leiden University,\\
2300 RA Leiden, P.O. Box 9512\\
The Netherlands}
\email{tijdeman@math.leidenuniv.nl}
\subjclass{11D85}
\keywords{representation of integers, linear combinations, powers}
\thanks{Research supported in part by the OTKA grants K67580
and K75566, and by the T\'AMOP 4.2.1./B-09/1/KONV-2010-0007
project. The project is implemented through the New Hungary
Development Plan, cofinanced by the European Social Fund and the European Regional Development Fund.}
\begin{document}

\dedicatory{Dedicated to Professors K. Gy\H{o}ry and A.
S\'ark\"ozy on their 70th birthdays\\ and Professors A. Peth\H{o} and J. Pintz on their 60th birthdays}

\begin{abstract} At a conference in Debrecen in October
2010 Nathanson announced some results concerning the arithmetic diameters of certain sets. (See his paper in the present
volume.) He proposed some related problems on the representation of integers by sums or differences of powers of $2$ and of $3$. In this note we prove some results on this problem and the more general problem about the representation by  linear combinations of powers of some fixed integers.
\end{abstract}

\maketitle

\section{Introduction}

Let $P$ be a nonempty finite set of prime numbers, and let $T$ be the set of positive integers that are products of powers of primes in $P$. Put $T_P=T\cup (-T)$. Then there does not exist an integer $k$ such that every positive integer can be represented as a sum of at most $k$ elements of $T_P$. This follows e.g. from Theorem 1 of Jarden and Narkiewicz \cite{jn}, cf. \cite{h, ahl}. At a conference in Debrecen in October 2010 Nathanson announced the following stronger result (see also \cite{n}):
\\
\\
{\it For every positive integer $k$ there exist infinitely many integers $n$ such that $k$ is the smallest value of $l$ for which
$n$ can be written as
$$
n=a_1+a_2+\cdots+a_l~~(a_1,a_2,\dots,a_l\in T_P).
$$
}
\\
Let $f(k)$ be the smallest positive integer which cannot be
represented as sum of less than $k$ terms from $T_P$. In Problem 2 of \cite{n} Nathanson asked to give estimates for $f(k)$. (The notation in \cite{n} is somewhat different from ours.) Problem 1 asks the same question in case $T$ consists of the pure powers of $2$ and of $3$. Observe that in both cases $f(k)$ can be represented as a sum of $k$ terms from $T_P$, since less than $T_P$ terms suffice for $f(k)-1$ and $1\in T_P$.

In this note we consider Problem 1. More generally, let $B=\{b_1,\dots,b_t\}$ be any finite set of positive integers. Put $A = \{ b_i^j,-b_i^j~ :~ i=1,\dots,t;j=0,1,2,\dots\}$. Note that on writing $P=\{p\ \text{prime}\ :\ p\mid b_1\cdots b_t\}$ we have $A\subseteq T_P$. So there is no $k$ for which every positive integer can be represented as a sum of at most $k$ elements of $A$. Let $f(k)$ be the smallest positive integer which cannot be represented as sum of less than $k$ terms of $A$. Similarly as above, we get that $f(k)$ can be represented as a sum of $k$ terms of $A$.

In this paper we show that there exists a number $c$ depending
only on $B$ and an absolute constant $C$ such that $\exp(ck)<f(k)<\exp((k \log t)^C)$. Moreover, we show that there are infinitely many $k$'s for which $f(k)<\exp(c^*k\log
(2kt)\log\log k)$ where $c^*$ is some constant.

For the upper bound we apply a method of \'Ad\'am, Hajdu and Luca \cite{ahl} in which a result of Erd\H{o}s, Pomerance and Schmutz \cite{eps} plays an important part. We refine the result of Erd\H{o}s, Pomerance and Schmutz in Section \ref{sec2} and that of \'Ad\'am, Hajdu and Luca in Section \ref{sec3}. In Section \ref{sec4} we derive lower and upper bounds for $f(k)$ in a somewhat more general setting. We conclude with some remarks in Section \ref{sec5}.

\section{An extension of a theorem of Erd\H{o}s, Pomerance and Schmutz}\label{sec2}

Let $\lambda(m)$ be the Carmichael function of the positive
integer $m$, that is the least positive integer for which
$$
b^{\lambda(m)} \equiv 1 ~~({\rm mod}~m)
$$
for all $b \in \mathbb{Z}$ with gcd$(b,m)=1$. Theorem 1 of
\cite{eps} gives the following information on small values of the Carmichael function.

\vskip.1cm

\noindent {\it For any increasing sequence $(n_i)_{i=1}^{\infty}$ of positive integers, and any positive constant $c_0 < 1/ \log 2$, one has
$$
\lambda (n_i) > ( \log n_i)^{c_0 \log \log \log n_i}
$$
for $i$ sufficiently large. On the other hand, there exist a strictly increasing sequence $(n_i)_{i=1}^{\infty}$ of positive integers and a positive constant $c_1$, such that, for every $i$,
$$
\lambda (n_i)<(\log n_i)^{c_1\log\log\log n_i}.
$$}

This nice theorem does not give any information on the size of
$n_i$. Since we need such information in this paper, we prove the following refinement of the second part. The proof is an extension of the proof in \cite{eps}.

\begin{thm}
\label{thm1} There exist positive constants $c_2, c_3$ such that for every large integer $i$ there is an integer $m$ with $\log m\in [\log i,(\log i)^{c_2}]$ and
$$
\lambda(m)<(\log m)^{c_3\log\log\log m}.
$$
\end{thm}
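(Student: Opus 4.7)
The plan is to adapt the construction in \cite{eps} while keeping explicit control on the size of $m$. Let $N = \mathrm{lcm}(1,2,\ldots,y)$ for a parameter $y$ to be chosen, so that by the prime number theorem $\log N = (1+o(1))y$. The building blocks are primes $p$ with $p-1 \mid N$: if $m$ is any product of such primes, then $\lambda(m) \mid N$, hence $\log\lambda(m) \le (1+o(1))y$.

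First, I would fix $y$ of the shape $y = C_1 (\log\log i)(\log\log\log i)$ with $C_1$ a suitable absolute constant, and consider the set
\[
S(x) = \{p \text{ prime} : p \le x,\ p-1 \mid N\}.
\]
Using a Brun--Titchmarsh-type estimate applied to arithmetic progressions $a \equiv 1 \pmod d$ as $d$ ranges over divisors of $N$, together with a lower bound on the number of divisors of $N$ up to $x$, one obtains
\[
\sum_{p \in S(x)} \log p \ge \log i
\]
for some $x$ with $\log x \le (\log i)^{c_2}$; this is the quantitative refinement of \cite{eps} needed here.

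Next, I would let $m$ be a product of primes from $S(x)$ whose logarithms first sum past $\log i$. Since each contribution is at most $\log x \le (\log i)^{c_2}$, the resulting $m$ satisfies $\log i \le \log m \le 2(\log i)^{c_2}$, which can be absorbed into the $c_2$ in the statement. Combined with $\log\lambda(m) \le \log N = O((\log\log i)(\log\log\log i)) \le c_3 \log\log m \log\log\log m$, this gives $\lambda(m) < (\log m)^{c_3\log\log\log m}$.

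The main obstacle is the simultaneous tension between the two conditions on $y$: it must be small enough that $\log N$ lies on the scale $\log\log m \cdot \log\log\log m$, yet large enough that $N$ has enough divisors $d \le x$ with $d+1$ prime so that $\sum_{p \in S(x)} \log p$ can reach $\log i$ while $\log x$ stays at most $(\log i)^{c_2}$. Handling this balance requires tracking the divisor counts and Brun--Titchmarsh bounds quantitatively in $i$, which is precisely where the proof extends \cite{eps}.
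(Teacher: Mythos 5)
There is a genuine gap at the heart of your argument, namely the step where you claim that
\[
\sum_{p \in S(x)} \log p \;\ge\; \log i
\]
follows from ``a Brun--Titchmarsh-type estimate applied to arithmetic progressions $a \equiv 1 \pmod d$ \ldots together with a lower bound on the number of divisors of $N$ up to $x$.'' Brun--Titchmarsh is an \emph{upper} bound for primes in progressions, so it cannot produce the lower bound on $\sharp S(x)$ that you need; and counting divisors $d$ of $N$ says nothing about how many of them have $d+1$ prime. More seriously, the difficulty is structural: you fix the modulus $N=\mathrm{lcm}(1,\dots,y)$ in advance and then need it to have at least $\log i/y$ divisors $d$ with $d+1$ prime. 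No unconditional result delivers this for a \emph{prescribed} $N$. The known results in this direction (Prachar's theorem and the refinement of Adleman, Pomerance and Rumely \cite{apr}) only assert that \emph{some} squarefree $h<x^2$ has more than $e^{c\log x/\log\log x}$ divisors $d$ with $d+1$ prime, and they obtain this by averaging over many candidate moduli and selecting the best one. You cannot avoid this averaging step, and it is precisely the input your sketch is missing.

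For comparison, the paper's proof takes the \cite{apr} result as its starting point: it sets $x=(\log i)^{(2/c_4)\log\log\log i}$, lets $y=h_x$ be the good squarefree modulus supplied by \cite{apr}, and defines $m=\prod_{p-1\mid y}p$. The lower bound $m>i$ then comes for free from the cardinality bound $e^{c_4\log x/\log\log x}>\log i/\log 2$, the bound $\lambda(m)\le y<x^2$ gives the desired inequality for $\lambda(m)$, and the upper bound $\log m\le(\log i)^{c_2}$ follows from bounding the number of divisors of the squarefree number $y$ by $2^s$ with $s\ll\log y/\log\log y$. Your outline gets the bookkeeping of scales right (the choice $y\asymp(\log\log i)(\log\log\log i)$ is consistent with the target $\log\lambda(m)\ll\log\log m\cdot\log\log\log m$, and the greedy selection of primes to land $\log m$ just above $\log i$ is fine), but without the \cite{apr}-type existence statement the construction does not get off the ground.
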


\begin{proof}
In \cite{apr} it is shown that there is a computable constant $c_4>0$ with the property that, for any $x>10$, there is a squarefree number $h_x < x^2$ for which
$$
\sum_{p-1 | h_x} ~ 1>e^{c_4\log x/\log\log x}.
$$
Put $x =(\log i)^{(2/c_4)\log\log\log i},y=h_{x}$, and $m=\prod_{p-1 | y} ~ p$. Note that, for $i$ sufficiently large, we have
$$
m\geq\prod_{p-1| y}2 ~>~ \exp\left((\log~2)\exp\left(\frac{c_4 \log x}{\log\log x}\right)\right)>i.
$$
But then, for $i$ sufficiently large and $c_3=4/c_4$,
$$
\lambda(m)\leq y<x^2=(\log i)^{(4/c_4)\log\log\log i}<
(\log m)^{c_3\log\log\log m}.
$$

It remains to estimate $m$ from above. Let $s$ be the number of prime factors of the squarefree number $y$ and $0<\varepsilon<0.1$. Then $y$ is at least $s^{(1-\varepsilon)s}$ if $i$ is sufficiently large. Hence $s<(1+2\varepsilon)\log y/\log\log y$. It follows that
$$
\sum_{p-1 | y} ~1\leq 2^s<\frac{y^{1/\log\log y}}{\log(y+1)}
$$
when $i$ is large. Thus
$$
\log m=\log\left(\prod_{p-1 | y} ~ p \right)<\sum_{p-1 | y}
~ \log (y+1)<y^{1/\log\log y}< x^{2/\log\log x}<(\log i)^{c_2}
$$
for some constant $c_2$.
\end{proof}

\section{An extension of a theorem of \'Ad\'am, Hajdu and Luca} \label{sec3}

Let $B=\{b_1,\dots,b_t\}$ be any finite set of positive integers. Let $A=\{b_i^j ~:~ i=1,\dots,t;j=0,1,2,\dots\}$. Let $k$ be a positive integer and $R$ a finite set of integers of
cardinality $\rho$. Put
$$
H_{B,R,k}=\{n\in\mathbb{Z}:n=\sum_{i=1}^k r_ia_i\}
$$
where $r_i\in R,a_i\in A ~~ (i=1,2,\dots,k)$. For $H\subseteq\mathbb{Z}$ and $m\in\mathbb{Z},m\geq 2$, we write
$\sharp H$ for the cardinality of the set $H$ and
$$
H({\rm mod}~m)=\{i:0\leq i<m,h\equiv i ~( {\rm mod}~ m)~
{\rm for ~ some~}h\in H\}.
$$
Observe that the definition of $A$ differs from that in the
introduction and that we get the situation described there by
choosing $R=\{-1,1\}$.

\begin{thm} \label{thm2} Let $B,R$ and $k$ be given as above. For every sufficiently large integer $i$ there exists a number $m$ with $\log m\in [\log i,(\log i)^{c_2}]$ such that
$$
\sharp H_{B, R, k} ~({\rm mod}~m)<(\rho t)^k(\log m)^{c_5k\log\log\log m}
$$
where $c_5$ is a constant.
\end{thm}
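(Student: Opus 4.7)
The plan is to deduce Theorem \ref{thm2} from Theorem \ref{thm1} by a straightforward residue-counting argument. The key observation is that if $\gcd(b,m)=1$ then $b^{\lambda(m)}\equiv 1\pmod m$, so the set $\{b^j\pmod m:j\ge 0\}$ has at most $\lambda(m)$ elements. If I can produce an $m$ for which $\gcd(b_j,m)=1$ for every $j\in\{1,\dots,t\}$ and for which $\lambda(m)$ is as small as Theorem \ref{thm1} guarantees, then a direct count of residues of $k$-fold sums of elements of $R\cdot A$ will give the bound sought.

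To construct such an $m$, let $Q$ be the squarefree radical of $b_1\cdots b_t$, a constant depending only on $B$. Given large $i$, I will apply Theorem \ref{thm1} with input $iQ$ in place of $i$ to obtain an $m_0$ with $\log m_0\in[\log(iQ),(\log(iQ))^{c_2}]$ and $\lambda(m_0)<(\log m_0)^{c_3\log\log\log m_0}$. Since the $m_0$ produced in the proof of Theorem \ref{thm1} is a squarefree product of distinct primes, I will let $m$ be the divisor of $m_0$ obtained by discarding from that product those (at most finitely many) primes that divide some $b_j$. Then $\gcd(b_j,m)=1$ for all $j$; moreover $m_0/m$ divides $Q$, so $m\ge m_0/Q\ge i$, while $\log m\le\log m_0\le(\log i+\log Q)^{c_2}$, which for $i$ large is absorbed into $(\log i)^{c_2}$ after a harmless enlargement of the exponent. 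Since $m\mid m_0$, also $\lambda(m)\mid\lambda(m_0)$, so the Carmichael bound survives verbatim.

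The counting is then immediate: for each fixed $j$ the set $\{b_j^\ell\pmod m:\ell\ge 0\}$ has at most $\lambda(m)$ elements, whence $\{a\pmod m:a\in A\}$ has at most $t\lambda(m)$ elements and $\{ra\pmod m:r\in R,\,a\in A\}$ has at most $\rho t\lambda(m)$ elements. Every element of $H_{B,R,k}$ is a sum of $k$ such terms, and its residue modulo $m$ depends only on the $k$-tuple of summand residues, so
$$
\sharp H_{B,R,k}\,({\rm mod}~m)\le(\rho t\lambda(m))^k<(\rho t)^k(\log m)^{c_5 k\log\log\log m}
$$
with $c_5=c_3$, as claimed. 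The only genuine subtlety is the coprimality of $m$ with the $b_j$, which Theorem \ref{thm1} does not deliver automatically; I handle it by the modest device of inflating $i$ by the constant $Q$ before invoking Theorem \ref{thm1} and then pruning the offending primes from $m_0$. All other steps are routine residue counting driven by the Carmichael inequality.
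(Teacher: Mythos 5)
Your proof is correct, and the overall skeleton (invoke Theorem \ref{thm1} to get an $m$ with small $\lambda(m)$, then count residues of $k$-fold sums of elements of $R\cdot A$) matches the paper's. Where you genuinely diverge is in the treatment of the one technical obstacle, namely that the $b_j$ need not be coprime to $m$. The paper keeps the $m$ delivered by Theorem \ref{thm1} unchanged and instead invokes Lemma \ref{lem1} of \'Ad\'am, Hajdu and Luca, which bounds $\sharp\{b^u \ ({\rm mod}\ m) : u\ge 0\}$ by $\lambda(m)+\max_j\alpha_j$ for \emph{arbitrary} $b$ (the extra term accounts for the pre-period of the eventually periodic sequence $b^u$ modulo $m$, and is at most $\log m/\log 2$, which is absorbed into the final exponent). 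You instead force coprimality by inflating $i$ to $iQ$ and pruning from $m$ the primes dividing $b_1\cdots b_t$, after which the plain Carmichael periodicity suffices. Your route avoids the external lemma, but it pays two small prices: it relies on the fact that the $m$ constructed in the \emph{proof} of Theorem \ref{thm1} is squarefree (the statement of Theorem \ref{thm1} does not guarantee this, so your argument is not a black-box reduction), and it perturbs the constants --- the interval becomes $[\log i,(\log i+\log Q)^{c_2}]$ and the Carmichael bound transfers from $m_0$ to $m$ only after enlarging $c_3$ slightly, so your final $c_5$ is not literally $c_3$ but some nearby constant. Both of these are harmless since the constants are unspecified, but you should state them as adjustments rather than claiming the bounds survive ``verbatim.'' The residue count itself, $\sharp H_{B,R,k}\ ({\rm mod}\ m)\le(\rho t\lambda(m))^k$, is exactly the paper's.
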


In the proof of Theorem \ref{thm1} the following lemma is used.

\begin{lem} \label{lem1} {\rm (\cite{ahl}, Lemma 1)}.\\
Let $m=q_1^{\alpha_1}\cdots q_z^{\alpha_z}$ where $q_1,\dots,q_z$ are distinct primes and $\alpha_1,\dots,\alpha_z$ are positive integers, and let $b\in\mathbb{Z}$. Then
$$
\sharp\{b^u~({\rm mod}~m):u\geq 0\}\leq\lambda(m)+
\max_{1\leq j\leq z}\alpha_j.
$$
\end{lem}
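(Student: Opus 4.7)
The plan is to take the integer $m$ produced by Theorem \ref{thm1} and bound $\sharp H_{B,R,k}\,({\rm mod}\,m)$ using Lemma \ref{lem1}. I first note that the $m$ constructed in the proof of Theorem \ref{thm1} is squarefree, since it is the product $\prod_{p-1\mid y}p$ of distinct primes; hence in the notation of Lemma \ref{lem1} one has $\max_{1\leq j\leq z}\alpha_j=1$, and the lemma specializes to $\sharp\{b^u\,({\rm mod}\,m):u\geq 0\}\leq\lambda(m)+1$ for every integer $b$.

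Fix a large integer $i$ and let $m$ be the integer guaranteed by Theorem \ref{thm1}, so that $\log m\in[\log i,(\log i)^{c_2}]$ and $\lambda(m)<(\log m)^{c_3\log\log\log m}$. Applying the previous observation to each $b_l\in B$ and taking the union, the image of $A$ modulo $m$ has cardinality at most $t(\lambda(m)+1)$. Multiplying by the $\rho$ choices for a coefficient $r_i\in R$ gives at most $\rho t(\lambda(m)+1)$ possible residues for a single term $r_ia_i$, and since every element of $H_{B,R,k}$ is a sum of $k$ such terms, one obtains
$$
\sharp H_{B,R,k}\,({\rm mod}\,m)\leq\bigl(\rho t(\lambda(m)+1)\bigr)^k.
$$

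Substituting the bound $\lambda(m)+1\leq 2(\log m)^{c_3\log\log\log m}$ produces $(\rho t)^k\,2^k(\log m)^{c_3k\log\log\log m}$, and I absorb the $2^k$ factor into the double-logarithmic exponent: since $2\leq(\log m)^{\varepsilon\log\log\log m}$ for every fixed $\varepsilon>0$ once $m$ is large enough, choosing any absolute $c_5>c_3$ and enlarging the threshold on $i$ accordingly yields
$$
\sharp H_{B,R,k}\,({\rm mod}\,m)<(\rho t)^k(\log m)^{c_5k\log\log\log m},
$$
as required. The heavy lifting is done by Theorem \ref{thm1} combined with Lemma \ref{lem1}; the one point deserving a moment's care is checking that the $2^k$ overhead (and the $+1$ inside the power) can be absorbed uniformly into a single constant $c_5$, but this is harmless because $\log m\to\infty$ with $i$, and the threshold on $i$ in the statement is allowed to depend on $k$, $t$, $\rho$ and on $c_5$.
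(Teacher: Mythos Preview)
Your proposal does not prove Lemma~\ref{lem1}; it \emph{uses} Lemma~\ref{lem1} as an input and instead proves Theorem~\ref{thm2}. The lemma is a self-contained statement about the orbit of powers of a single integer $b$ modulo an arbitrary $m$, and makes no reference to $H_{B,R,k}$, to Theorem~\ref{thm1}, or to the particular squarefree $m$ constructed there. In the paper the lemma is simply quoted from \cite{ahl} without proof, so there is no in-paper argument to compare against at the level of the lemma itself.

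If your intended target was Theorem~\ref{thm2}, then your argument is essentially the same as the paper's: both apply Lemma~\ref{lem1} to each $b\in B$, multiply by $\rho t$, raise to the $k$th power, and absorb the additive correction to $\lambda(m)$ into the exponent constant $c_5$. Your observation that the $m$ produced in the proof of Theorem~\ref{thm1} is squarefree (so $\max_j\alpha_j=1$) is a mild simplification over the paper, which keeps the general bound $\max_j\alpha_j\leq(\log m)/\log 2$; either correction is swallowed the same way. One small remark: the absorption $2\leq(\log m)^{\varepsilon\log\log\log m}$ holds once $m$ exceeds an absolute threshold, so contrary to your closing caveat the cutoff on $i$ need not depend on $k$, $t$, or $\rho$.
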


The proof of Theorem \ref{thm2} is similar to that of Theorem 3 of \cite{ahl}. In that paper there is the restriction that of each element of $B$ only one power occurs in $H_{B,R,K}$, hence $k=t$.

\begin{proof}[Proof of Theorem \ref{thm2}] Let $i$ be an integer so large that Theorem \ref{thm1} applies. Choose $m$ as in Theorem \ref{thm1}. Write $m$ as in Lemma \ref{lem1} as a product of powers of distinct primes. Lemma \ref{lem1} implies that for all $b\in B$,
$$
\sharp\{r\cdot b^u~({\rm mod}~m):b\in B,r\in R,u\geq 0 \}
\leq\rho t\left(\lambda(m)+\max_{1\leq j\leq z}\alpha_j\right).
$$
On the other hand, with the constant $c_3$ from Theorem \ref{thm1},
$$
\lambda(m)+\max_{1\leq j\leq z}\alpha_j\leq(\log m)^{c_3
\log\log\log m}+\frac{\log m}{\log 2}.
$$
The combination of both inequalities yields the theorem.
\end{proof}

\section{Representing integers as linear combinations of powers} \label{sec4}

We use the notation of Section \ref{sec3}. Suppose we want to
express the positive integer $n$ as a finite sum of powers of
$b_1$. For this we  apply the greedy algorithm. If we subtract the largest power of $b_1$ not exceeding $n$ from $n$, we obtain a number which is less than $n(1-1/b_1)$. We can iterate subtracting the highest power of $b_1$ not exceeding the rest from the rest and so reduce the rest each time by a factor at most $1-1/b_1$. Hence we can represent $n$ as the sum of at most $\log n/\log(1/(1-1/b_1))$ powers of $b_1$. Thus we find that the sum of $k\leq c_6\log n$ powers of $b_1$ suffices to represent $n$, where $c_6$ depends only on $b_1$. This implies the lower bound $\exp(ck)$ for $f(k)$ claimed in the introduction. More generally, let $f_R(k)$ be the smallest positive integer $n$ which cannot be represented as a sum $\sum_{j=1}^lr_ja_j$ with $l<k,r_j\in R,a_j\in A$. Then the above argument shows that $1\in R$ implies $f_R(k)>e^{k/c_6}$.

For an upper bound for $f_R(k)$ suppose first that all the elements of $R$ are positive. We study the representation of positive integers up to $n$ as $\sum_{j=1}^{k-1}r_jb_j^{k_j}$ with $r_j\in R,b_j\in B,k_j\in\mathbb{Z},k_j\geq 0$. Then $k_j\leq\log n/\log b_j\leq\log n/\log 2$. Hence the number of
represented integers is at most $\left(\rho t\log n/\log
2\right)^{k-1}$. If this number is less than $n$, then we are sure that some positive integer $\leq n$ is not represented. This is the case if
$$
k-1<\frac{\log n}{\log(\rho t)+\log\log n-\log\log 2}.
$$
Hence it suffices that $n\geq(1.5\rho kt\log(\rho
kt))^{k-1}$ and for this special case we find that
$$
f_R(k)\leq(1.5\rho kt\log (\rho kt))^{k-1}.
$$

We now turn to the general case. Choose the smallest positive
integer $i>10$ such that $i>(\rho t)^k(\log i)^{c_5k\log\log\log i}$. Then $i<2(\rho t)^k(\log i)^{c_5k\log\log\log i}$. It follows that
$$
\log i<k(\log\rho t)+c_7k(\log\log i)(\log\log\log i)
$$
for some constant $c_7$, thus $\log i<2k\log(\rho t)$ or $\log i<2c_7k(\log\log i)(\log\log\log i)$. In the latter case $\log i<c_{8}k(\log k)(\log\log k)$ for some suitable constant $c_{8}$. According to Theorem \ref{thm2} there exists an $m$ with $\log i\leq\log m\leq(\log i )^{c_2}$ such that all representations  are covered by at most  $(\rho t)^k (\log
m)^{c_5k\log\log\log m}$ residue classes modulo $m$. By the
definition of $i$ and the inequality $i\leq m$, we see that this number of residue classes is less than $m$, therefore at least one positive integer $n\leq m$ has no representation of the form $\sum_{j=1}^{k}r_ja_j$ with $r_j\in R,a_j\in A$ for all $j$. Since $\log m\leq(\log i)^{c_2}$, we obtain
$$
\log n\leq\log m \leq(\log i)^{c_2}<\left(\max(2k\log
\rho t,c_{8}k(\log k)(\log\log k))\right)^{c_2}<(k\log\rho t)^{c_{9}}
$$
for some constant $c_{9}$.

There are infinitely many $k$'s for which a considerably better bound for $f_R(k)$ can be derived by a variant of the above
argument. According to Theorem \ref{thm1} there are infinitely
many integers $m$ for which
\begin{equation}
\label{eps}
\lambda(m)<(\log m)^{c_3\log\log\log m}.
\end{equation}
Let $B$, hence $A,\rho$ and $t$ be given. Choose $k$ as the
largest integer such that
$$
(\rho t)^k(\log m)^{c_5k\log\log\log m}<m
$$
for an $m$ satisfying (\ref{eps}). It follows from Theorem \ref{thm1} that there are infinitely many such $k$'s. Theorem \ref{thm2} and its proof imply that there is a positive integer $n\leq m$ which is not representable as a linear combination of $k$ elements of $A$ with coefficients from $R$. Moreover,
$$
\log m\leq(k+1)(\log(\rho t)+c_5\log\log m\log\log\log m).
$$
Hence $\log m\leq2(k+1)\log(\rho t)$ or $\log m\leq 2c_5(k+1)\log\log m\log\log\log m$. In the latter case $\log m\leq c_{10}k\log k\log\log k$ where $c_{10}$ is some
constant. Combining both inequalities we obtain, for some constant $c_{11}$,
$$
\log n\leq\log m\leq c_{11}k\log(\rho kt)\log\log k.
$$

So we have proved the following result.

\begin{thm} \label{thm3} Let $B=\{b_1,\dots,b_t\}$ be any finite set of positive integers. Put $A=\{b_i^j ~:~ i=1,\dots,t;j=0,1,2,\dots\}$. Let $R$ be a finite set of integers of cardinality $\rho$ and $k$ a positive integer. Denote by $f_R(k)$ the smallest positive integer which cannot be represented in the form $ \sum_{i=1}^{k-1}r_ia_i$ with $r_i\in R,a_i\in A$ for all $i$. Then\\
(i) if $1\in R$, then $\log f_R(k)>k/c_6$ for some number $c_6>0$ depending only on $b_1$,\\
(ii)  if all elements of $R$ are positive, then $f_R(k)\leq(1.5\rho kt\log(\rho kt))^{k-1}$,\\
(iii) there exists a constant $c_9$ such that $\log f_R(k)<(k\log(\rho t))^{c_9}$,\\
(iv) there exist a constant $c_{11}$ and infinitely many positive integers $k$ such that $\log f_R(k)\leq c_{11}k\log(\rho kt)\log\log k$.
\end{thm}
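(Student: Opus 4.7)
The four parts can be attacked essentially independently, with (i) and (ii) elementary and (iii), (iv) resting on Theorems \ref{thm1} and \ref{thm2}.

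For (i), I would apply the greedy algorithm in base $b_1$: given $n$, repeatedly subtract the largest power $b_1^s$ not exceeding the running remainder. Since $b_1^s > n/b_1$, the remainder shrinks by a factor at most $1-1/b_1$, so the procedure terminates in at most $c_6\log n$ steps, where $c_6 = 1/\log(b_1/(b_1-1))$. Because $1\in R$ and each $b_1^s\in A$, this is an admissible representation, so every $n\le e^{k/c_6}$ is writable with fewer than $k$ terms (after a cosmetic adjustment of $c_6$), giving $\log f_R(k) > k/c_6$.

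For (ii), when all elements of $R$ are positive, every summand of $n=\sum_{j=1}^{k-1} r_j b_{i_j}^{e_j}$ is itself at most $n$, so $e_j\le\log n/\log 2$. The total number of admissible tuples $(r_j,b_{i_j},e_j)_{j=1}^{k-1}$ is therefore at most $(\rho t\log n/\log 2)^{k-1}$, and checking that this is smaller than $n=(1.5\rho kt\log(\rho kt))^{k-1}$ is a routine computation that yields the claimed upper bound.

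For (iii), I would choose the modulus self-consistently: take the smallest $i>10$ with $i>(\rho t)^k(\log i)^{c_5 k\log\log\log i}$, so that by minimality $i$ is at most twice the right-hand side, which after taking logarithms forces either $\log i\le 2k\log(\rho t)$ or $\log i\le c_8 k\log k\log\log k$. Theorem \ref{thm2} then supplies $m$ with $\log m\in[\log i,(\log i)^{c_2}]$ whose residue-class count $\sharp H_{B,R,k}\,(\operatorname{mod} m)$ is strictly smaller than $m$, so some positive integer $n\le m$ lies outside $H_{B,R,k}$, giving $\log f_R(k)\le(\log i)^{c_2}\le(k\log(\rho t))^{c_9}$. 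For (iv) I would reverse the order of quantification: fix any $m$ enjoying the stronger bound $\lambda(m)<(\log m)^{c_3\log\log\log m}$ from Theorem \ref{thm1} and take $k$ maximal with $(\rho t)^k(\log m)^{c_5 k\log\log\log m}<m$. Theorem \ref{thm1} furnishes infinitely many such $m$ and hence infinitely many such $k$, the residue-class argument again produces an unrepresentable $n\le m$, and solving the defining inequality for $\log m$ yields $\log f_R(k)\le c_{11}k\log(\rho kt)\log\log k$.

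The main obstacle I anticipate is the implicit bookkeeping in (iii) and (iv): in each, $k$ appears both as an exponent and buried inside iterated logarithms of $m$ (or $i$), and these inequalities must be carefully inverted to extract clean bounds polynomial in $k\log(\rho t)$ respectively $k\log(\rho kt)\log\log k$. One small subtlety is aligning "at most $k$ terms" with "fewer than $k$ terms" in the definition of $f_R(k)$, but this amounts to a harmless shift in the index and does not affect the asymptotic bounds.
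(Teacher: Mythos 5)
Your proposal is correct and follows essentially the same route as the paper: greedy algorithm in base $b_1$ for (i), the counting of tuples $(r_j,b_{i_j},e_j)$ for (ii), the self-consistent choice of the minimal $i$ combined with Theorem \ref{thm2} for (iii), and the reversed quantification (fix $m$ with small $\lambda(m)$, take $k$ maximal) for (iv). The only remaining work is the routine bookkeeping you already identify, which the paper carries out in exactly the way you sketch.
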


In Nathanson's Problem 1 mentioned in the introduction we have $R=\{-1,1\}$, and hence $\rho=2$. Thus we have the following
consequences for the function $f$.

\begin{cor} There is a positive number $c$ depending only on $b_1$ such that $\log f(k)>ck$.\\
On the other hand, $\log f(k)<(k\log (2t))^C$ where $C$ is a
constant.\\
Moreover, $\log f(k)<c^*k \log (2kt)\log\log k$ for infinitely many integers $k$ where $c^*$ is a constant.
\end{cor}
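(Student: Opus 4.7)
The plan is to observe that this corollary is an immediate specialization of Theorem~\ref{thm3} to the case $R=\{-1,1\}$, in which $\rho=2$ and the function $f_R$ defined there coincides with the $f$ of the introduction (since every element of $T_P$ has the form $r\cdot a$ with $r\in\{-1,1\}$ and $a\in A$). The three assertions then follow by matching them one-to-one with the appropriate clauses of Theorem~\ref{thm3}: part (i) gives the lower bound, part (iii) gives the general upper bound, and part (iv) gives the refined upper bound on an infinite subsequence of $k$.

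More concretely, for the lower bound I would invoke Theorem~\ref{thm3}(i), whose hypothesis $1\in R$ is trivially satisfied, to get $\log f(k)>k/c_6$ with $c_6$ depending only on $b_1$; setting $c:=1/c_6$ gives the first inequality. For the polynomial-in-$k$ upper bound, Theorem~\ref{thm3}(iii) applied with $\rho=2$ yields $\log f(k)<(k\log(2t))^{c_9}$, so $C:=c_9$ works. For the refined bound, Theorem~\ref{thm3}(iv) applied with $\rho=2$ produces infinitely many integers $k$ for which $\log f(k)\leq c_{11}k\log(2kt)\log\log k$, and taking $c^*:=c_{11}$ completes the argument.

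There is essentially no substantive obstacle beyond the bookkeeping of identifying the correct clauses of Theorem~\ref{thm3}. The only subtlety worth flagging is that the sharper estimate of Theorem~\ref{thm3}(ii), of order $(1.5\rho kt\log(\rho kt))^{k-1}$, is \emph{not} available here, because its hypothesis requires all elements of $R$ to be positive, and our $R=\{-1,1\}$ violates this. Consequently the weaker clause (iii) is the correct one to use for the second displayed inequality.
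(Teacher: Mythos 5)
Your proposal is correct and matches the paper exactly: the authors likewise obtain the corollary by specializing Theorem~\ref{thm3} to $R=\{-1,1\}$, $\rho=2$, reading off parts (i), (iii) and (iv) for the three assertions. Your remark that part (ii) is unavailable because $-1\in R$ is a sensible (if unstated in the paper) observation and does not affect the argument.
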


\section{Some remarks} \label{sec5}

\noindent{\bf Remark 1.} To prove that $f_R(k)>e^{ck}$ we assumed $1\in R$. Here we check what happens if this condition is not fulfilled. Obviously, we may assume that $0\notin R$ and that not all the elements of $R$ are negative. Further, if the elements of $R$ are not coprime, then there is a full residue class not represented as $\sum_{i=1}^k r_ia_i$. Therefore we may assume that the elements of $R$ are coprime. (In particular, since $R=\{1\}$ is now excluded, that $\rho>1$.)

Assume first that $R$ contains a negative element. There exist
integers $d_j~ (j=1,\dots,\rho)$ such that $\sum_{j=1}^\rho
d_jr_j=1$. Let $P=\prod_{j=1}^\rho |r_j|$. So $P$ is a multiple of $r_j$ for every $j$. Consider the sum $\sum_{j=1}^{\rho} \left(d_j+e_j\frac{P}{r_j}\right)r_j$ where the $e_j$'s are integers such that $d_j+e_j\frac{P}{r_j}\geq 0$ and $e_jr_j\geq 0$ for all $j$. Let $v=\sum_{j=1}^t e_j$. If $v>0$ then we replace $e_j$ by $e_j-v$ for some $j$ with $r_j<0$, and if $v<0$ then we do so for some $j$ with $r_j>0$. Afterwards $\sum_{j=1}^t e_j=0$, hence $\sum_{j=1}^{\rho} \left(d_j+e_j\frac{P}{r_j}\right)r_j=1$, and further
$d_j+e_j\frac{P}{r_j} \geq 0$ for $j=1,\dots,\rho$. Thus $1$
admits a representation $\sum_{j=1}^k r_ja_j$ where $a_j=b_1^0=1$ for all $j$, and $k$ is bounded by $\sum_{j=1}^{\rho}\left(d_j+e_j\frac{P}{r_j}\right)$, that is by a constant $c_{12}$ which only depends on $R$.

If the elements of $R$ are coprime and all positive, then we have to do with the so-called coin problem or Frobenius problem. Let $0<r_1\leq r_2\leq\dots\leq r_{\rho}$. Schur \cite{b} proved in 1935 that every number larger than $c_{13}:=r_1r_{\rho}+r_2+\dots+r_{\rho-1}$ can be represented as a linear combination of $r_1,r_2,\dots,r_{\rho}$ with nonnegative integer coefficients. Note that $c_{13}$ depends only on $R$. Therefore each of the integers in the interval $(c_{13},2c_{13}]$ can be represented as $\sum_{j=1}^{c_{14}} r_ja_j$ where the number $c_{14}$ depends only on $R$. We can now use the greedy algorithm as in the first block of Section \ref{sec4}, iterating until we reach this interval, to obtain a representation of $n>c_{13}$ with at most $(c_{6}+c_{14})\log n$ terms $r_ja_j$.

We conclude that if the elements of $R$ are coprime, every
positive integer $n>c_{13}$ can be represented as $\sum_{i=1}^{k-1} r_ia_i$ with $r_i\in R,a_i\in A$ for all $i$
and with $k<c_{15}\log n$, where $c_{15}$ is a number depending only on $R$. Thus $f_R(k)>e^{ck}$ where $c$ is a number depending only on $R$.

\vskip.1truecm

\noindent {\bf Remark 2.} The upper bound for $f(k)$ can possibly be improved by deriving a version of Theorem \ref{thm1} where the interval for $m$ is essentially smaller at the cost of a larger bound for $\lambda(m)$. We expect that the given upper bound for infinitely many values of $k$ may be close to an upper bound for all $k$.

\section{Acknowledgements}

The authors are grateful to the referees for their helpful remarks.

\end{document}